\newtheorem{teo}{Theorem}
\newtheorem{lem}[teo]{Lemma}
\newtheorem{rmk}[teo]{Remark}
\begin{document}

\title[Twisted conjugacy in lamplighter-type groups]
{Twisted conjugacy in some lamplighter-type groups}

\author{Evgenij Troitsky}
\thanks{The work is supported by the Russian Science Foundation under grant 21-11-00080.}
\address{Moscow Center for Fundamental and Applied Mathematics, MSU Department,\newline
	Dept. of Mech. and Math., Lomonosov Moscow State University, 119991 Moscow, Russia}
\email{troitsky@mech.math.msu.su}
\keywords{Reidemeister number,  
twisted conjugacy class, 
lamplighter group,
wreath product}
\subjclass[2000]{
20E45; 
37C25; 
}

\begin{abstract}
For a restricted wreath product $G\wr \mathbb{Z}^k$, where $G$ is a finite abelian group, we determine (almost in all cases) whether this product has the $R_\infty$ property (i.e., each its automorphism has infinite Reidemeister number).
\end{abstract}

\maketitle
The \emph{Reidemeister number} $R(\varphi)$ of an automorphism $\varphi:\Gamma\to\Gamma$ is defined as the number of its 
Reidemeister (or twisted conjugacy) classes (i.e., classes with the respect to the equivalence relation
$x\sim gx\varphi(g^{-1})$). We study the problem of determination of the class of groups with the property
 $R_\infty$ (i.e., each its automorphism has infinite Reidemeister number) among restricted wreath products $G\wr \mathbb{Z}^k$, where $G$ is a finite abelian group. 
We will prove the following two theorems, which give the answer almost in all cases. A part of cases in the first theorem was proved in \cite{FraimanTroitsky}.

\begin{teo}\label{teo:cases}
Suppose, the prime-power decomposition of $G$ is $\oplus_i ({\mathbb Z}_{(p_i)^{r_i}})^{d_i}$. Then under each of the following 
conditions the corresponding wreath product $G\wr {\mathbb Z}^k$ admits an automorphism $\varphi$ with $R(\varphi)<\infty$, i.e. does not have the property $R_\infty$:
\begin{description}
\item[Case 1) (see \cite{FraimanTroitsky})] for all $p_i=2$ and $p_i=3$, we have  $d_i\ge 2$ (and is arbitrary for primes $p_i>3$);
\item[Case 2) (see \cite{FraimanTroitsky})] there is no $p_i=2$ and $k$ is even;
\item[Case 3) (see \cite{FraimanTroitsky})] for all $p_i=2$, we have  $d_i\ge 2$  and $k=4s$ for some $s$;
\item[Case 4)]  for all $p_i=2$, we have $d_i \ge 3$ and $k$ is even; 
\item[Case 5)]  for all $p_i=2$, we have $d_i \ge 2$, $d_i \ne 3$ and $k$ is even, $k \ge 4$.
\end{description}
\end{teo}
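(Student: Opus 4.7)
The plan is to construct, for each of Cases 4 and 5, an automorphism $\varphi$ of $G\wr\mathbb{Z}^k$ with $R(\varphi)<\infty$, following the framework developed in \cite{FraimanTroitsky}. I will work with automorphisms of the form $(A,\sigma)$, where $A\in \mathrm{GL}_k(\mathbb{Z})$ acts on the base $\mathbb{Z}^k$ and $\sigma\in\mathrm{Aut}(G)$ acts coordinatewise on the lamp group $\bigoplus_{\mathbb{Z}^k}G$, the two actions being intertwined via the semidirect-product structure. Since $k$ is even, the base ingredient is easy: take $A$ block-diagonal with $2\times 2$ blocks $\begin{pmatrix}0&-1\\1&0\end{pmatrix}$, so that $\det(A-I)=2^{k/2}\neq 0$ and hence $R(A)<\infty$ on the quotient $\mathbb{Z}^k$. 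Since every automorphism of a finite abelian group preserves its primary decomposition, one may take $\sigma=\bigoplus_i\sigma_i$; the odd-primary summands are handled as in Case 2, and the real task is to choose $\sigma_i$ on each $2$-primary summand $(\mathbb{Z}_{2^{r_i}})^{d_i}$.

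For each $2$-primary summand, finiteness of the contribution to $R(\varphi)$ reduces, via Nakayama's lemma (as in Cases 1--3 of \cite{FraimanTroitsky}), to the invertibility modulo $2$ of a matrix built from $\sigma_i$ and the image of $A$ in an appropriate quotient of $\mathbb{F}_2[t_1^{\pm 1},\ldots,t_k^{\pm 1}]$. For Case 4, the condition $d_i\geq 3$ gives enough room in $\mathrm{GL}_{d_i}(\mathbb{F}_2)$ to pick $\sigma_i$ whose minimal polynomial avoids the bad factors modulo $2$; a Singer-type element of order $2^{d_i}-1$ is a natural candidate, since its characteristic polynomial is irreducible of degree $d_i\geq 3$ over $\mathbb{F}_2$ and therefore disjoint from the $\mathbb{F}_2$-roots coming from $A$. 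For Case 5, the extra assumption $k\geq 4$ provides additional dimensions of the base that can be used to accommodate a smaller or non-splittable $\sigma_i$ by splitting $A$ into two or more even-sized blocks with matching sub-choices of $\sigma_i$; the exclusion $d_i\neq 3$ is precisely the exceptional case where this extra room in $k$ fails to compensate for the mod~$2$ obstruction, because $3$ admits no decomposition into parts each of size at least $2$ while $d_i=2$ and $d_i\geq 4$ do.

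The main obstacle will be the mod $2$ verification. For $k\equiv 2\pmod 4$, the Case 3 trick (which exploited $k=4s$) is unavailable, and this is why the stronger hypotheses on $d_i$ must be imposed. A careful case-by-case analysis of how the minimal polynomial of $\sigma_i$ interacts modulo $2$ with the image of $A$ must be carried out to confirm that no ``bad eigenvalue'' arises; this delicate linear algebra over $\mathbb{F}_2$ is what explains the specific arithmetic restrictions on $d$ and $k$ and why the two cases must be treated with somewhat different constructions.
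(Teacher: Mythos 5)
There is a genuine gap, and it is located exactly where Cases 4) and 5) have content beyond Case 1). Your base matrix $A=\begin{pmatrix}0&-1\\1&0\end{pmatrix}^{\oplus k/2}$ has order $4$, so the orbits of $\overline{\varphi}$ on ${\mathbb Z}^k\setminus\{0\}$ (and of the affine maps $x\mapsto \overline{\varphi}(x)+m$ arising from $\tau_m\circ\varphi'$) have \emph{even} length. Now recall why Cases 4) and 5) are not subsumed by Case 1): they must accommodate an odd prime-power summand ${\mathbb Z}_{3^{r}}$ of multiplicity one. On such a summand every automorphism is multiplication by a unit $m$ with $m\equiv\pm1\pmod 3$, hence $m^{\gamma}\equiv 1\pmod 3$ for every even orbit length $\gamma$; the restriction of $\tau_m\circ\varphi'$ to the summand over such an orbit then has nontrivial fixed elements, so by (\ref{eq:finabel}) and Lemma \ref{lem:R_nee} one gets $R(\varphi)=\infty$ no matter how you choose $\sigma$ on the $2$-primary part. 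Saying that ``the odd-primary summands are handled as in Case 2'' does not repair this: there is only one base automorphism $\overline{\varphi}$ for the whole construction, and the Case 2 treatment of ${\mathbb Z}_{3^r}$ presupposes a base matrix all of whose orbit lengths are odd, which is incompatible with your choice. This is precisely why the paper takes $\overline{\varphi}$ of \emph{odd} order: in Case 4) a sum of order-$3$ blocks with $\det(E-M)=3$ (orbit lengths $1$ or $3$, and $m$ chosen so that $m$ and $m^{3}$ are $\not\equiv 1$), and in Case 5) companion matrices of $\Phi_5$ and $\Phi_7$ of orders $5$ and $7$ (orbit lengths dividing $35$, hence odd, so $m=p-1$ works).

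Beyond this, the $2$-primary part of your argument is a plan rather than a proof: no $\sigma_i$ is exhibited and no fixed-point condition is verified, whereas the paper writes down explicit matrices $F_2,\dots,F_5$, checks that $F_j^{\gamma}$ is fixed-point-free $\bmod\ 2$ for the relevant $\gamma$, and uses the elementary facts that every $d\ge 3$ is a sum of $3$'s, $4$'s and $5$'s (Case 4) and every $d\ge 2$, $d\ne 3$ is a sum of $2$'s and $5$'s (Case 5). Your explanations of the hypotheses are also not the operative ones: $k\ge 4$ in Case 5) is needed simply because the order-$5$ integer matrix is $4\times 4$ (one cannot realize odd orbit lengths compatible with a $2\times 2$ lamp block when $k=2$), and $d_i\ne 3$ is excluded because no element $F\in GL(3,{\mathbb F}_2)$ has $F^{\gamma}$ fixed-point-free for all $\gamma\mid 35$ (every element of order $2$, $3$ or $4$ has eigenvalue $1$, and order $7$ gives $F^{7}=E$), equivalently because $3$ is not a sum of $2$'s and $5$'s --- not because of any interaction with the size of $k$.
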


\begin{teo}\label{teo:inverse}
In the above notation, suppose that there is a summand ${\mathbb Z}_{p^i}$ of multiplicity one, where $p=2$ and $k$ is arbitrary or $p=3$ and $k$ is odd. Then $G\wr {\mathbb Z}^k$ has the $R_\infty$ property.
\end{teo}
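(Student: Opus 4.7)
The plan is to show that any automorphism $\varphi$ of $\Gamma := G \wr \mathbb{Z}^k$ has $R(\varphi) = \infty$ by descending to an induced action on a characteristic $\mathbb{F}_p$-valued subquotient coming from the multiplicity-one summand, and then running an orbit-counting argument tailored to the parity hypotheses on $(p,k)$.

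First I would reduce to the case where $G$ is a $p$-group: the primary decomposition $G = G_p \oplus G_{p'}$ is characteristic, so $B_{p'} \subset B$ is characteristic in $\Gamma$ and the characteristic quotient $\Gamma \twoheadrightarrow G_p \wr \mathbb{Z}^k$ lower-bounds $R(\varphi)$; hence I may assume $G = G_p$, still containing the multiplicity-one summand $\mathbb{Z}_{p^r}$. Next I would exploit the hypothesis through the characteristic subgroups $V := p^r G \cap G[p]$ and $U := p^{r-1} G \cap G[p]$ of $G$ (characteristic since defined by intrinsic $p$-power and $p$-torsion operations). An elementary-divisor computation gives $\dim_{\mathbb{F}_p}(U/V) = d_r = 1$, so $U/V \cong \mathbb{F}_p$. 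Lifting to the characteristic subgroups $V^{(\mathbb{Z}^k)} \subset U^{(\mathbb{Z}^k)} \subset B$, we obtain a characteristic quotient $\bar\Gamma := \Gamma / V^{(\mathbb{Z}^k)} \cong (G/V) \wr \mathbb{Z}^k$ containing a characteristic $\mathbb{Z}^k$-submodule $\bar U := (U/V)^{(\mathbb{Z}^k)} \cong \mathbb{F}_p[\mathbb{Z}^k]$. Because $\bar U$ is a free rank-one $\mathbb{F}_p[\mathbb{Z}^k]$-module, the induced $\sigma_M$-semilinear automorphism on $\bar U$ must have the form $r \mapsto \sigma_M(r) \cdot u$ with $u = c \cdot t^{\vec{w}}$ a monomial unit, where $M \in GL_k(\mathbb{Z})$ is the image of $\varphi$ on the torsion-free quotient.

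Then, by direct enumeration of twisted conjugacy classes in $B \rtimes \mathbb{Z}^k$ combined with a snake-lemma argument along the characteristic filtration, I would bound $R(\varphi)$ below by $|\det(M - I)| \cdot |\mathrm{coker}(1 - \tilde\varphi)|$, with the cokernel computed on $\bar U$. This cokernel is the $\mathbb{F}_p$-span of orbits of the affine map $L(\vec{m}) := M\vec{m} + \vec{w}$ on $\mathbb{Z}^k$, with each finite orbit of size $N$ contributing a summand iff $c^N = 1$ in $\mathbb{F}_p^\times$ (infinite orbits always contribute). Proving $R(\varphi) = \infty$ thus reduces to showing either $\det(M - I) = 0$ or that the cokernel has infinite $\mathbb{F}_p$-dimension.

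The hard part will be the orbit-counting conclusion under the hypotheses of the theorem. For $p = 2$: the scalar $c = 1$ is automatic since $\mathbb{F}_{2}^{\times}$ is trivial, so every orbit contributes; a case analysis on $M \in GL_k(\mathbb{Z})$ with $\det(M - I) \ne 0$ (splitting into finite-order $M$, where orbits are uniformly bounded yet $\mathbb{Z}^k$ is infinite, and infinite-order $M$, where each orbit is one-dimensional and hence cannot cover $\mathbb{Z}^k$) shows $L$ always has infinitely many orbits, forcing $R(\varphi) = \infty$. For $p = 3$ with $k$ odd: here $c \in \{1,2\}$ may suppress finite-orbit contributions of certain sizes, but combining the orbit-size analysis with the parity constraint imposed by $k$ odd—which restricts the possible finite orders of $M \in GL_k(\mathbb{Z})$ and constrains $\det(M - I) \bmod 2$—prevents complete suppression of contributions, again yielding an infinite cokernel and $R(\varphi) = \infty$.
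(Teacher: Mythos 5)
Your overall strategy---isolate a characteristic subquotient of $\Sigma$ isomorphic to $\mathbb{F}_p[\mathbb{Z}^k]$ coming from the multiplicity-one summand, observe that the induced semilinear automorphism is $r\mapsto \sigma_M(r)\cdot c\,t^{\vec w}$ because the units of $\mathbb{F}_p[\mathbb{Z}^k]$ are trivial, and then count orbits of the affine map $L(\vec m)=M\vec m+\vec w$---is essentially the same mechanism the paper uses, except that the paper packages the last step as a citation: it reduces to the induced automorphism of a quotient $\mathbb{Z}_p\wr\mathbb{Z}^k$ of a characteristic subgroup $D\wr\mathbb{Z}^k$ and then invokes the theorem of \cite{TroLamp} that $\mathbb{Z}_p\wr\mathbb{Z}^k$ has $R_\infty$ exactly when $p=2$, or $p=3$ with $k$ odd. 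Your proposal instead tries to reprove that result inline, and that is where it has a genuine gap. For $p=3$ with $c=-1\in\mathbb{F}_3^\times$, only the infinite orbits and the finite orbits of \emph{even} length contribute to $\operatorname{coker}(1-\tilde\varphi)$, so you must rule out the possibility that all but finitely many orbits of $L$ have odd length. The actual argument has nothing to do with ``$\det(M-I)\bmod 2$'': one shows that if $k$ is odd and $\det(M-I)\ne 0$ then $M$ cannot have odd finite order (an odd-degree characteristic polynomial forces a real eigenvalue $\pm1$; $+1$ is excluded by $\det(M-I)\ne0$ and $-1$ is not an odd-order root of unity), and one still has to handle even finite order and infinite order separately. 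None of this is in your sketch, and your $p=2$ parenthetical (``each orbit is one-dimensional and hence cannot cover $\mathbb{Z}^k$'') only yields \emph{more than one} orbit, not infinitely many; a growth/counting argument is needed for infinite-order $M$.

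There is a second, smaller gap in the claimed lower bound $R(\varphi)\ge|\det(M-I)|\cdot|\operatorname{coker}(1-\tilde\varphi)|$ with the cokernel computed on the \emph{submodule} $\bar U$. The snake lemma for $0\to\bar U\to\Sigma_{G/V}\to\Sigma_{(G/V)/(U/V)}\to0$ only gives that an infinite $\operatorname{coker}(1-\tilde\varphi)|_{\bar U}$ forces either $\operatorname{coker}(1-\varphi')$ infinite or $\ker(1-\varphi'_*)$ infinite on the complementary quotient; in the second case you still need to conclude $R(\varphi)=\infty$, which requires the residual-finiteness argument (Lemma \ref{lem:Jab_fin} applied to $\Sigma_{(G/V)/(U/V)}\rtimes\mathbb{Z}^k$), exactly the diagram chase around (\ref{eq:diagr_for_D_1})--(\ref{eq:diagr_for_D}) in the paper. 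Your reduction to the $p$-part and the identification $\dim_{\mathbb{F}_p}(U/V)=d_r=1$ via $p^{r}G\cap G[p]\subset p^{r-1}G\cap G[p]$ are fine and are a clean alternative to the paper's explicit construction of $B$, $D$ and $B''$. So the skeleton is right, but as written the proof is incomplete precisely at the step that carries the arithmetic content of the hypotheses on $(p,k)$.
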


As a by-product we obtain a correction of an inaccuracy in \cite{gowon1} (see Remark \ref{rmk:gowon}).

We refer to \cite{gowon1,SteinTabackWong2015,TroLamp,Fraiman,FraimanTroitsky}
for the previous advances in the problem and to \cite{FelshtynTroitskyFaces2015JGT}
for a general look and a description of importance and applications of the problem under consideration.

Recall some facts to be used in the proofs.
We have by definition, $G\wr \mathbb{Z}^k = \Sigma \rtimes_\alpha \mathbb{Z}^k$,  where  
$\Sigma$ denotes $\oplus_{x\in \mathbb{Z}^k} G_x$, and $\alpha(x)(g_y) =g_{x+y}$. Here $g_x$ is $g \in G \cong G_x$.

Denote by $C(\varphi):=\{g\in \Gamma \colon \varphi(g)=g\}$ the subgroup of $\Gamma$, formed by $\varphi$-fixed elements.
For an inner automorphism as well as for its restriction on a normal subgroup, we use the notation $\tau_g(x)=gxg^{-1}$ .

\begin{lem}[Prop. 3.4 in \cite{FelLuchTro}]\label{lem:Jab_fin}
Suppose, $\Gamma$ is a finitely generated residually finite group, $\varphi:\Gamma\to\Gamma$ is an automorphism, and 
$R(\varphi)<\infty$. Then $|C({\varphi})|< \infty$.
\end{lem}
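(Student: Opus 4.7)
I would argue by contradiction: assume $|C(\varphi)|=\infty$ and aim to derive $R(\varphi)=\infty$. By finite generation together with residual finiteness of $\Gamma$, there is a cofinal descending chain of characteristic finite-index subgroups $\Gamma=\Gamma_{0}\supset\Gamma_{1}\supset\Gamma_{2}\supset\cdots$ with $\bigcap_{n}\Gamma_{n}=\{1\}$; the standard choice $\Gamma_{n}:=\bigcap\{H\le\Gamma:[\Gamma:H]\le n\}$ is a finite intersection (by finite generation) of subgroups permuted by $\mathrm{Aut}(\Gamma)$, hence itself characteristic and of finite index. Each $\Gamma_{n}$ is $\varphi$-invariant, so $\varphi$ induces $\bar\varphi_{n}\in\mathrm{Aut}(\bar\Gamma_{n})$ on the finite group $\bar\Gamma_{n}:=\Gamma/\Gamma_{n}$. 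The projection $\pi_{n}:\Gamma\twoheadrightarrow\bar\Gamma_{n}$ is $\varphi$-equivariant and sends Reidemeister classes onto Reidemeister classes, so $R(\varphi)\ge R(\bar\varphi_{n})$ for every $n$.

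Next I would note that fixed points push forward: $\pi_{n}(C(\varphi))\subseteq C(\bar\varphi_{n})$, and by residual finiteness any finite subset of the infinite set $C(\varphi)$ injects into $\bar\Gamma_{n}$ for all sufficiently large $n$, so $|C(\bar\varphi_{n})|\ge|\pi_{n}(C(\varphi))|\to\infty$. The last ingredient is a lower bound on $R$ by fixed-point data in finite groups. For this I would invoke the Burnside-type identity, obtained from the orbit-counting lemma applied to the twisted conjugation action $g\cdot x:=gx\psi(g^{-1})$ of $H$ on itself,
\[
R(\psi)=\#\bigl\{\text{conjugacy classes of $H$ setwise preserved by }\psi\bigr\},
\]
together with the observation that every $\psi$-fixed element of $H$ lies in such a $\psi$-invariant class. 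Thus $R(\bar\varphi_{n})$ is at least the number of distinct conjugacy classes of $\bar\Gamma_{n}$ meeting $C(\bar\varphi_{n})$.

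The main obstacle is converting ``$|C(\bar\varphi_{n})|\to\infty$'' into ``many conjugacy classes of $\bar\Gamma_{n}$ are met'', since the fixed elements could in principle cluster into very few conjugacy classes. The cleanest workaround is to post-compose $\pi_{n}$ with the abelianization $\bar\Gamma_{n}\twoheadrightarrow\bar\Gamma_{n}^{\mathrm{ab}}$: on a finite abelian group each conjugacy class is a singleton, so the Burnside identity collapses to $R(\bar\varphi_{n}^{\mathrm{ab}})=|\mathrm{Fix}(\bar\varphi_{n}^{\mathrm{ab}})|$, and then $R(\varphi)\ge R(\bar\varphi_{n}^{\mathrm{ab}})\ge|\pi_{n}^{\mathrm{ab}}(C(\varphi))|$. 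This closes the argument whenever $C(\varphi)$ has infinite image in the finitely generated abelian group $\Gamma^{\mathrm{ab}}$. In the contrary case $C(\varphi)\cap[\Gamma,\Gamma]$ is infinite, and I would iterate on a characteristic finite-index subgroup of $\Gamma$ sitting in the derived filtration, using that finite-index subgroups of finitely generated residually finite groups are themselves finitely generated and residually finite.
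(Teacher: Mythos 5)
The paper states this lemma without proof (it is imported verbatim from \cite{FelLuchTro}), so your argument can only be judged on its own terms. Its first two thirds are sound: the chain of characteristic finite-index subgroups $\Gamma_n$ with $\bigcap_n\Gamma_n=\{1\}$, the inequality $R(\varphi)\ge R(\bar\varphi_n)$, the inclusion $\pi_n(C(\varphi))\subseteq C(\bar\varphi_n)$ with $|\pi_n(C(\varphi))|\to\infty$, and the Burnside identity $R(\psi)=\#\{\psi\text{-invariant conjugacy classes}\}$ for a finite group are all correct. The gap is exactly at the point you yourself flag as the ``main obstacle'': passing to $\bar\Gamma_n^{\mathrm{ab}}$ only settles the case where $C(\varphi)$ has infinite image in $\Gamma^{\mathrm{ab}}$, and your fallback --- ``iterate on a characteristic finite-index subgroup sitting in the derived filtration'' --- is not a valid induction. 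The derived subgroup need not have finite index, need not be finitely generated, and, worse, there exist finitely generated residually finite groups all of whose finite-index subgroups have finite abelianization (e.g.\ $SL_3(\mathbb{Z})$, by the congruence subgroup property); taking $\varphi=\mathrm{id}$ there, your procedure never produces an infinite abelian image at any stage and never terminates, even though the conclusion of the lemma does hold for that group. So the ``contrary case'' is genuinely unhandled.

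The missing ingredient is a quantitative bound of $|C(\psi)|$ by $R(\psi)$ for finite groups, which comes from orbit--stabilizer rather than from Burnside. For a finite group $H$ and automorphism $\psi$, the twisted conjugation action $g\cdot x=gx\psi(g)^{-1}$ has exactly $R(\psi)$ orbits, and the stabilizer of $e$ is precisely $C(\psi)$; the orbit decomposition gives the twisted class equation $1=\sum_{i=1}^{R(\psi)}1/|\mathrm{Stab}(x_i)|$, one of whose terms is $1/|C(\psi)|$. By Landau's classical argument, a $k$-term Egyptian-fraction representation of $1$ has all denominators bounded by some $f(k)$, whence $|C(\psi)|\le f(R(\psi))$. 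Applied to each $\bar\varphi_n$ this gives $|C(\bar\varphi_n)|\le f(R(\bar\varphi_n))\le f(R(\varphi))$ uniformly in $n$, contradicting $|\pi_n(C(\varphi))|\to\infty$; this closes the proof with no case distinction and no abelianization.
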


\begin{lem}[\cite{FelHill,go:nil1}, see also \cite{polyc,GoWon09Crelle}]\label{lem:extensions}
Suppose, $\varphi:\Gamma\to \Gamma$ is an automorphism of a discrete group, $H$ is a normal $\varphi$-invariant subgroup of $\Gamma$, so  $\varphi$ induces automorphisms $\varphi':H\to H$ and $\widetilde{\varphi}:\Gamma/H \to \Gamma/H$. Then
	\begin{itemize}
		\item the projection $\Gamma\to \Gamma/H$ maps Reidemeister classes of $\varphi$ onto Reidemeister classes of $\widetilde{\varphi}$, in particular $R(\widetilde{\varphi})\le R(\varphi)$;

		\item if $|C(\widetilde{\varphi})|=n$, then $R(\varphi')\le R(\varphi)\cdot n$;

		\item if $C(\widetilde{\varphi})=\{e\}$, then 		each Reidemeister class of $\varphi'$ is an intersection of the appropriate Reidemeister class of $\varphi$ and $H$; 

		\item if $C(\widetilde{\varphi})=\{e\}$, then 
		$R(\varphi)=\sum_{j=1}^R R(\tau_{g_j} \circ \varphi')$, where $g_1,\dots g_R$ are some elements of $\Gamma$ such that
		$p(g_1),\dots,p(g_R)$ are representatives of all Reidemeister classes of $\widetilde{\varphi}$, 
$p:\Gamma\to \Gamma/H$ is the natural projection and $R=R(\widetilde{\varphi})$.
	\end{itemize}	
\end{lem}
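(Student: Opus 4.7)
The plan is to exploit one elementary computation: applying $p$ to $y = g x \varphi(g^{-1})$ gives $p(y) = p(g)\, p(x)\, \widetilde{\varphi}(p(g)^{-1})$. This immediately yields the first bullet (well-definedness of the induced map on Reidemeister class sets; surjectivity comes from lifting any $\widetilde\varphi$-class representative to $\Gamma$), and reveals the key constraint for the remaining bullets: if $x, y \in H$ are $\varphi$-equivalent via $g$, then $p(g) \in C(\widetilde\varphi)$.

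For the second and third bullets I would fix $x \in H$ and decompose $[x]_\varphi \cap H$ using coset representatives $\gamma_1 = e, \gamma_2, \dots, \gamma_n$ of $H$ inside $p^{-1}(C(\widetilde\varphi))$, where $n = |C(\widetilde\varphi)|$. A direct rewriting that uses normality of $H$ and $\varphi$-invariance gives
\[
\gamma_i h x \varphi(h^{-1}) \varphi(\gamma_i^{-1}) = (\gamma_i h \gamma_i^{-1}) \cdot y_i \cdot \varphi'((\gamma_i h \gamma_i^{-1})^{-1}), \quad y_i := \gamma_i x \varphi(\gamma_i^{-1}),
\]
so $[x]_\varphi \cap H = \bigcup_i [y_i]_{\varphi'}$ is a union of at most $n$ $\varphi'$-classes. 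Summing over the (at most $R(\varphi)$) $\varphi$-classes meeting $H$ gives $R(\varphi') \le n\cdot R(\varphi)$. When $C(\widetilde\varphi) = \{e\}$, only $\gamma_1 = e$ survives, collapsing the decomposition to $[x]_\varphi \cap H = [x]_{\varphi'}$.

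For the fourth bullet, fix lifts $g_j$ and write $R(\varphi) = \sum_j N_j$, where $N_j$ counts $\varphi$-classes inside $A_j := p^{-1}([p(g_j)]_{\widetilde\varphi})$. First I would show every class in $A_j$ meets the coset $g_j H$: given $x \in A_j$, pick $\gamma$ with $p(x) = p(\gamma)p(g_j)\widetilde\varphi(p(\gamma^{-1}))$, then $\gamma^{-1}x\varphi(\gamma) \in g_j H$ is $\varphi$-equivalent to $x$. Next I would identify $N_j$ with $R(\psi_j)$ for $\psi_j := \varphi' \circ \tau_{g_j}|_H$: the relation $g_j h_2 = h' g_j h_1 \varphi(h'^{-1})$ with $h' \in H$ rewrites, via $h'' := g_j^{-1}h'g_j \in H$, as $h_2 = h'' h_1 \psi_j(h''^{-1})$. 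Finally, $R(\psi_j) = R(\tau_{g_j}\circ\varphi')$ because $\varphi'\circ\tau_{g_j}$ and $\tau_{g_j}\circ\varphi'$ are conjugate in $\mathrm{Aut}(H)$, and conjugate automorphisms share Reidemeister numbers.

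The hard part is the last step of bullet four: verifying that $C(\widetilde\varphi) = \{e\}$ genuinely suffices to rule out additional identifications from conjugating elements $\gamma \in \Gamma \setminus H$. The naive constraint only forces $p(\gamma)$ into $\mathrm{Fix}(\tau_{p(g_j)}\widetilde\varphi)$, the stabilizer of $p(g_j)$ under the twisted $(\Gamma/H)$-action on itself, and this stabilizer is not trivial \emph{a priori}. Closing the gap requires either showing that these stabilizers really are trivial under the hypothesis, or proving that any such $\gamma$ acts on $g_j H$ in the same way as some $\gamma_0 \in H$ already does, so that no $\psi_j$-classes get collapsed further.
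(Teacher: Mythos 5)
The paper itself gives no proof of this lemma---it is quoted from \cite{FelHill,go:nil1,polyc,GoWon09Crelle}---so there is no in-paper argument to compare against; I judge your attempt on its own. Your treatment of the first three bullets is correct and complete: the computation $p(gx\varphi(g^{-1}))=p(g)\,p(x)\,\widetilde\varphi(p(g))^{-1}$ gives the first bullet, and for $x,y\in H$ it forces the conjugator to satisfy $p(g)\in C(\widetilde\varphi)$ exactly, which makes your coset decomposition of $[x]_\varphi\cap H$ into at most $n$ classes $[y_i]_{\varphi'}$ airtight and collapses it to a single class when $n=1$. For the fourth bullet, the reduction to counting classes on the cosets $g_jH$, the identification of the induced relation there with twisted conjugacy by $\varphi'\circ\tau_{g_j}$, and the passage to $\tau_{g_j}\circ\varphi'$ by conjugating with $\varphi'$ inside $\mathrm{Aut}(H)$ are all correct.

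The gap you flag at the end is genuine, and you are right not to wave it away. The conjugator $\gamma$ identifying $g_jh_1$ with $g_jh_2$ is only constrained to satisfy $p(\gamma)\in C(\tau_{p(g_j)}\circ\widetilde\varphi)$, and triviality of $C(\widetilde\varphi)$ does not imply triviality of these other stabilizers for a general quotient: for $Q=F_2=\langle a,b\rangle$ and $\widetilde\varphi$ the swap $a\leftrightarrow b$ one has $C(\widetilde\varphi)=\{e\}$ (the swap changes every letter of a reduced word), while $(\tau_a\circ\widetilde\varphi)(ab)=a(ba)a^{-1}=ab$, so $C(\tau_a\circ\widetilde\varphi)\neq\{e\}$. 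Structurally, the fourth bullet is just the third bullet applied to each $\tau_{g_j}\circ\varphi$ (translate $A_j$ on the right by $g_j^{-1}$), and that application needs $C(\tau_{p(g_j)}\circ\widetilde\varphi)=\{e\}$ for every $j$; only the inequality $R(\varphi)\le\sum_j R(\tau_{g_j}\circ\varphi')$ comes for free from $C(\widetilde\varphi)=\{e\}$. So your argument cannot be completed from the hypothesis as literally stated without extra input; the honest hypothesis is triviality of all the twisted stabilizers $C(\tau_{\widetilde q}\circ\widetilde\varphi)$. In the present paper the fourth bullet is invoked only with $\Gamma/H\cong{\mathbb Z}^k$ abelian, where $\tau_{\widetilde q}=\mathrm{id}$ and the two hypotheses coincide, which is why the weaker formulation does no harm here; to finish your proof you should either add that restriction or strengthen the hypothesis accordingly.
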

 
For a semidirect product $\Sigma \rtimes_\alpha {\mathbb Z}^k $, one has by \cite{Curran2008} that automorphisms $\varphi':\Sigma\to\Sigma$
and $\overline{\varphi}: \Sigma \rtimes_\alpha {\mathbb Z}^k  /\Sigma \cong {\mathbb Z}^k \to {\mathbb Z}^k \cong \Sigma \rtimes_\alpha {\mathbb Z}^k  /\Sigma$ define an automorphism
$\varphi$ of $\Sigma \rtimes_\alpha {\mathbb Z}^k$ (generally not unique) if and only if
\begin{equation}\label{eq:maineq}
	\varphi'(\alpha(m)(h))=\alpha(\overline{\varphi}(g))(\varphi'(h)),\qquad h\in\Sigma,\quad m\in {\mathbb Z}^k.
\end{equation}

Since $\Sigma$ is abelian, by \cite[p.~207]{Curran2008} the mapping $\varphi_1$ defined as $\varphi'$ on $\Sigma$ and by $\overline{\varphi}$ on ${\mathbb Z}^k\subset \Sigma \rtimes {\mathbb Z}^k $ is still an automorphism.  
That is why, as it is proved in \cite{FraimanTroitsky}, 
one can assume that
\begin{equation}\label{eq:restric_on_sub}
{\mathbb Z}^k \subset A\wr {\mathbb Z}^k \mbox{ is $\varphi$-invariant and } \varphi|_{{\mathbb Z}^k}=\overline{\varphi}.
\end{equation}

\begin{lem}[Lemma 2.4 in \cite{FraimanTroitsky}]\label{lem:R_nee}
An automorphism $\varphi: G\wr {\mathbb Z}^k \to G \wr {\mathbb Z}^k$ has $R(\varphi)<\infty$ if and only if 
$R(\overline{\varphi})< \infty$ and $R(\tau_m \circ \varphi')<\infty$ for any $m \in {\mathbb Z}^k$ (in fact, it is sufficient to verify this for representatives
of Reidemeister classes of $\overline{\varphi}$).
\end{lem}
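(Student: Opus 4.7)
The plan is to apply Lemma~\ref{lem:extensions} to the normal subgroup $H=\Sigma\triangleleft\Gamma:=G\wr\mathbb{Z}^k$, whose quotient $\Gamma/\Sigma\cong\mathbb{Z}^k$ carries the induced automorphism $\overline{\varphi}$, with $\varphi|_\Sigma=\varphi'$. Two preliminary facts will drive the argument. First, an automorphism $\overline{\varphi}$ of $\mathbb{Z}^k$ (given by a matrix $A\in\mathrm{GL}_k(\mathbb{Z})$) satisfies $R(\overline{\varphi})=|\det(I-A)|$ when this determinant is nonzero and $R(\overline{\varphi})=\infty$ otherwise, so $R(\overline{\varphi})<\infty$ is equivalent to $C(\overline{\varphi})=\{0\}$. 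Second, if $m'=n+m-\overline{\varphi}(n)$ for some $n\in\mathbb{Z}^k$ (so $m$ and $m'$ represent the same $\overline{\varphi}$-Reidemeister class), then using (\ref{eq:maineq}) together with the identification $\tau_m|_\Sigma=\alpha(m)$, a short computation gives
\[
\tau_{m'}\circ\varphi'=\alpha(n)\circ(\tau_m\circ\varphi')\circ\alpha(n)^{-1},
\]
so that $R(\tau_m\circ\varphi')$ depends only on the $\overline{\varphi}$-Reidemeister class of $m$.

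For the forward direction, assume $R(\varphi)<\infty$. Bullet~1 of Lemma~\ref{lem:extensions} immediately yields $R(\overline{\varphi})\le R(\varphi)<\infty$, hence $C(\overline{\varphi})=\{0\}$. Thanks to (\ref{eq:restric_on_sub}), representatives $m_1,\ldots,m_R\in\mathbb{Z}^k$ of the Reidemeister classes of $\overline{\varphi}$ may be chosen inside the $\varphi$-invariant copy of $\mathbb{Z}^k$ in $\Gamma$, so bullet~4 furnishes the clean decomposition
\[
R(\varphi)=\sum_{j=1}^{R} R(\tau_{m_j}\circ\varphi'),
\]
which forces each $R(\tau_{m_j}\circ\varphi')<\infty$. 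By the second preliminary fact, then $R(\tau_m\circ\varphi')<\infty$ for every $m\in\mathbb{Z}^k$, not just the chosen representatives.

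For the backward direction, the hypothesis $R(\overline{\varphi})<\infty$ again gives $C(\overline{\varphi})=\{0\}$, and the same formula from bullet~4 shows that $R(\varphi)<\infty$ as soon as each $R(\tau_{m_j}\circ\varphi')$ is finite; by the second preliminary fact, it is enough to test this on one representative of each $\overline{\varphi}$-Reidemeister class, which justifies the parenthetical in the statement.

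I do not anticipate a serious obstacle. The only substantive calculation is the conjugation identity in the first paragraph, which is a routine unwinding of (\ref{eq:maineq}) using $\tau_n|_\Sigma=\alpha(n)$; everything else is a direct application of Lemma~\ref{lem:extensions} together with the determinantal characterization of $R(\overline{\varphi})$ on $\mathbb{Z}^k$.
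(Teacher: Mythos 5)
Your proof is correct, and it uses exactly the tools the paper assembles for this purpose: bullets 1 and 4 of Lemma~\ref{lem:extensions} applied to $H=\Sigma$, the equivalence $R(\overline{\varphi})<\infty\Leftrightarrow C(\overline{\varphi})=\{0\}$ from (\ref{eq:FHZk}), the normalization (\ref{eq:restric_on_sub}), and the conjugation identity $\tau_{m'}\circ\varphi'=\alpha(n)\circ(\tau_m\circ\varphi')\circ\alpha(n)^{-1}$ showing $R(\tau_m\circ\varphi')$ depends only on the $\overline{\varphi}$-Reidemeister class of $m$. The paper only cites this lemma from \cite{FraimanTroitsky} rather than reproving it, but your argument is the intended one and I see no gaps.
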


\begin{lem}[Lemma 2.5 in \cite{FraimanTroitsky}]\label{lem:how_to_de}
Suppose, $\overline{\varphi}:{\mathbb Z}^k \to {\mathbb Z}^k$ and $F:G\to G$ are automorphisms.
Define $\varphi'$ by
\begin{equation}\label{eq:how_to_def}
\varphi'(a_0)=(Fa)_0,\qquad \varphi'(a_x)=(Fa)_{\overline{\varphi}(x)}.
\end{equation}
Then (\ref{eq:maineq}) holds and $\varphi'$ is an automorphism of $G\wr {\mathbb Z}^k$.

Evidently the subgroups $\oplus G_x$, where $x$ runs over an orbit of $\overline{\varphi}$, are $\varphi'$-invariant direct summands of $\Sigma$.
\end{lem}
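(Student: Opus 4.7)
The proof is going to be a direct verification, so the plan is to check four things in order: (i) that $\varphi'$ is a well-defined endomorphism of $\Sigma$, (ii) that it is bijective, (iii) that it satisfies the compatibility identity~(\ref{eq:maineq}), and (iv) the ``evidently'' clause about orbit-indexed summands.

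First I would observe that $\Sigma=\oplus_{x\in{\mathbb Z}^k}G_x$ is a coproduct of copies of $G$, so to specify a homomorphism out of $\Sigma$ it suffices to specify one on each summand $G_x$. On $G_x$ the formula $a_x\mapsto (Fa)_{\overline{\varphi}(x)}$ is the composition of the isomorphism $G_x\cong G$, the automorphism $F$, and the isomorphism $G\cong G_{\overline{\varphi}(x)}$, hence a homomorphism $G_x\to G_{\overline{\varphi}(x)}\subset\Sigma$. Bijectivity is then immediate: the map $\psi$ given on generators by $\psi(a_x)=(F^{-1}a)_{\overline{\varphi}^{-1}(x)}$ is, by the same argument, a well-defined endomorphism of $\Sigma$, and on each summand $\varphi'\psi$ and $\psi\varphi'$ reduce to $FF^{-1}$ and $F^{-1}F$ respectively.

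Next I would check (\ref{eq:maineq}) on a generator $h=a_y$ of $\Sigma$; by linearity this is enough. For $m\in{\mathbb Z}^k$ one has $\alpha(m)(a_y)=a_{m+y}$, so that
\begin{equation*}
\varphi'(\alpha(m)(a_y))=(Fa)_{\overline{\varphi}(m+y)}=(Fa)_{\overline{\varphi}(m)+\overline{\varphi}(y)},
\end{equation*}
using that $\overline{\varphi}$ is a group homomorphism. On the other hand,
\begin{equation*}
\alpha(\overline{\varphi}(m))(\varphi'(a_y))=\alpha(\overline{\varphi}(m))\bigl((Fa)_{\overline{\varphi}(y)}\bigr)=(Fa)_{\overline{\varphi}(m)+\overline{\varphi}(y)}.
\end{equation*}
The two sides coincide, and the cited result of Curran together with (\ref{eq:restric_on_sub}) then promotes $\varphi'$ and $\overline{\varphi}$ to an honest automorphism of the semidirect product $G\wr{\mathbb Z}^k$.

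For the last assertion, fix an $\overline{\varphi}$-orbit $O\subset{\mathbb Z}^k$; the subgroup $\oplus_{y\in O}G_y$ is a direct summand of $\Sigma$ because it is a subsum of a direct sum, and $\varphi'$ sends each generator $a_y$ with $y\in O$ to $(Fa)_{\overline{\varphi}(y)}\in G_{\overline{\varphi}(y)}$, which again lies in the same summand since $\overline{\varphi}(y)\in O$. I do not expect any real obstacle: the formula for $\varphi'$ is tailor-made to intertwine the shift action $\alpha$ with $\overline{\varphi}$, so the only ``content'' is organizing the verification coordinate by coordinate and remembering that $\overline{\varphi}$ is additive.
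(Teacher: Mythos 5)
Your verification is correct and is the natural direct computation; the present paper imports this lemma from \cite{FraimanTroitsky} without reproducing a proof, and your coordinate-by-coordinate check of well-definedness, bijectivity via the explicit inverse $a_x\mapsto (F^{-1}a)_{\overline{\varphi}^{-1}(x)}$, the identity (\ref{eq:maineq}) on generators, and the orbit-invariance clause is exactly the argument one expects. No gaps.
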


Also, one can verify (see \cite{FelshtynHill1993CM}) that,
for $\overline{\varphi}:{\mathbb Z}^k\to {\mathbb Z}^k$ defined by a matrix $M$, one obtains
\begin{equation}\label{eq:FHZk}
R(\overline{\varphi})=\# \mathrm{Coker} (\mathrm{Id} -\overline{\varphi})=|\det(E-M)|,
\end{equation}
if $R(\overline{\varphi})<\infty$, and $|\det(E-M)|=0$ otherwise.
It is well known that, for an automorphism $\varphi: A\to A$ of finite abelian group $A$, we have
\begin{equation}\label{eq:finabel}
R(\varphi)=|C(\varphi)|.
\end{equation}

\begin{proof}[Proof of Theorem \ref{teo:cases}]
In each of the  cases we will consider  a specific automorphism $\overline{\varphi}:{\mathbb Z}^k \to {\mathbb Z}^k$ with $R(\overline{\varphi})<\infty$ 
and define $\varphi':\Sigma\to \Sigma$ with properties as in Lemmas \ref{lem:R_nee} and \ref{lem:how_to_de}.
More precisely, we will have $R(\varphi')=1$.

\textbf{Case 4):} when  $d_i >2$ for $p_i=2$ and $k=2 t$.
In this case the construction starts as in \cite{TroLamp}: we take $\overline{\varphi}:{\mathbb Z}^{2t} \to {\mathbb Z}^{2t}$ to be the direct sum of $t$ copies of
$$
{\mathbb Z}^2 \to {\mathbb Z}^2,\quad \begin{pmatrix}
u\\ v
\end{pmatrix} \mapsto M \begin{pmatrix}
u\\ v
\end{pmatrix}, \qquad M=\begin{pmatrix}
0 & 1\\
-1 & -1 
\end{pmatrix}. 
$$ 
Then $M$ generates a subgroup of $GL(2,{\mathbb Z})$, which is isomorphic to ${\mathbb Z}_3$ (see \cite[p. 179]{Newman1972book}).
All orbits of $M$ have length $3$ (except of the trivial one) and the corresponding Reidemeister number $=\det (E-M)=3$ (by (\ref{eq:FHZk})).
Similarly for $\overline{\varphi}$: the length of any orbit is $1$  or $3$ and $R(\overline{\varphi})=3^t$. 

Now define $\varphi'$ as a direct sum of  actions for ${\mathbb Z}_q$, $q=(p_i)^{r_i}$, $p_i \ge 3$, and for $({\mathbb Z}_{2^{r_i}})^2$, $({\mathbb Z}_{2^{r_i}})^3$.

For $p_i \ge 3$ choose $m=m_i$ such that   
\begin{equation}\label{eq:condit_on_m_3}
m^3 \mbox{ and } 1-m^3 \mbox{  are invertible in }{\mathbb Z}_q.
\end{equation}
This can be done for $p_i\ge 3$: one can take $m=3$ for $p_i=7$ and $m=2$ in the remaining cases (and impossible for $p_i=2$).  
Define $\varphi'(\delta_0)=m \delta_0$, where $\delta_0$ is $1\in ({\mathbb Z}_q)_0$. Then, to keep (\ref{eq:maineq}) we need to define 
$\varphi'(\delta_g)=m \delta_{\overline{\varphi}(g)}$, where $\delta_g$ is $1\in ({\mathbb Z}_q)_g$, as in Lemma \ref{lem:how_to_de}. So, the corresponding subgroup $\oplus_{g \in {\mathbb Z}^k}({\mathbb Z}_q)_g \subset \Sigma$ is $\varphi'$-invariant and decomposed into infinitely many invariant summands 
$({\mathbb Z}_q)_g \oplus ({\mathbb Z}_q)_{\overline{\varphi}(g)}\oplus ({\mathbb Z}_q)_{\overline{\varphi}^2(g)}$
isomorphic to $({\mathbb Z}_q)^3$ (over generic orbits of $\overline{\varphi}$) and one summand $({\mathbb Z}_q)_0$ (over the trivial orbit).
Then the corresponding restrictions of $\varphi'$ and   $1-\varphi'$ can be written as multiplication by 
$$
\begin{pmatrix}
0  &  0 & m \\
m & 0 & 0\\
0 & m & 0
\end{pmatrix}, \quad
\begin{pmatrix}
1  &  0 & -m \\
-m & 1 & 0\\
0 & -m & 1
\end{pmatrix}, \quad \mbox{and }
m, \quad 1-m,
$$
respectively. The three-dimensional mappings are isomorphisms by (\ref{eq:condit_on_m_3}).  Since an element $\ell$ is not invertible in ${\mathbb Z}_{(p_i)^{r_i}}$ if and only if $\ell = u\cdot p_i$, the invertibility of one-dimensional mappings follows from 
(\ref{eq:condit_on_m_3}) and the factorization $1-m^3=(1-m)(1+m+m^2)$. (This construction gives a more explicit presentation of a part of proof 
of \cite[Theorem 4.1]{TroLamp})

Now pass to $2$-subgroup. 
Define $F_j:({\mathbb Z}_q)^j \to ({\mathbb Z}_q)^j$ ($j=2,\dots,5$) as
\begin{equation}\label{eq:F2F3}
F_2 = \begin{pmatrix}
			0 & 1\\
			1 & 1
			\end{pmatrix} , F_3 = \begin{pmatrix}
			0&0 & 1\\
			0&1 & 1\\
            1& 1& 1
			\end{pmatrix},
F_4 = \begin{pmatrix}
			0&0 & 0& 1\\
			1& 0&0 & 1\\
            0& 1& 0 & 1\\
            0& 0& 1& 1
			\end{pmatrix},
		F_5 = \begin{pmatrix}
		0&  0&	0 & 0 & 1 \\
		0&  0&	0 & 1 & 1 \\
		0&  0&	1 & 1 & 1 \\
        0&  1&  1 & 1 & 1\\
        1 & 1 & 1 & 1 & 1
		\end{pmatrix}.
\end{equation}

For components $({\mathbb Z}_{2^{r_i}})^3$, $({\mathbb Z}_{2^{r_i}})^4$ and $({\mathbb Z}_{2^{r_i}})^5$,  
(in accordance with Lemma \ref{lem:how_to_de})
define  $\varphi'$ by
\begin{equation}\label{eq:def_on_comp}
a_0 \mapsto F_3 a_0,\qquad a_0 \mapsto F_4 a_0,\qquad a_0 \mapsto F_5 a_0,
\end{equation}
respectively.
Then, for any orbit of $\overline{\varphi}$ of length $\gamma$, we wish to verify that $(F_5)^\gamma$ as a homomorphism   $({\mathbb Z}_{2^i})^5\to ({\mathbb Z}_{2^i})^5$ has no non-trivial fixed elements
(to apply (\ref{eq:finabel})).
One can verify (better to write a small program) that the order of $F_5$ $\mod 2$ is $31$ and the less powers have no non-trivial fixed elements (here $\gamma=1$ or $3$ and we are interested in powers $1$ and $3$ only). An easier calculation shows the same for $F_3$ and $F_4$ (order $\mod 2$ is $7$ for both of them).
To complete the construction, it remains to observe that any number $\ge 3$ can be presented as a combination of $3$, $4$ and $5$ with positive integer coefficients. 

Concerning $\tau_g\circ \varphi'$ we should observe the following general property for automorphisms constructed in this way. One has $\overline{\varphi}^3(x)=x$  and can easily verify that $\overline{\varphi}^2z+\overline{\varphi}z+z=0$. Thus
$$
(\tau_z \circ \varphi') (g_x) = (m g)_{\overline{\varphi}(x)+z}, \quad
(\tau_z \circ \varphi') (g_{\overline{\varphi}(x)+z})=(mg)_{\overline{\varphi}^2(x)+\overline{\varphi}z+z},
$$
$$
(\tau_z \circ \varphi')g_{\overline{\varphi}^2(x)+\overline{\varphi}z+z}=(mg)_{\overline{\varphi}^3(x)+\overline{\varphi}^2z+\overline{\varphi}z+z}=(mg)_x.
$$
Hence $\tau_z \circ \varphi'$ is defined by the same matrices as $\varphi'$, but on new invariant summands
$({\mathbb Z}_q)_x \oplus ({\mathbb Z}_q)_{\overline{\varphi}(x)+z}\oplus ({\mathbb Z}_q)_{\overline{\varphi}^2(x)+\overline{\varphi}z+z}$. 
Similarly for the trivial orbit and for the $2$-subgroup. 
The properties of $m$ and $F_j$ imply the absence of non-trivial fixed elements. Hence, $R(\varphi')=1$ by
(\ref{eq:finabel}).
Case 4) is proved.

\textbf{Case 5):} when  $d_i >1$, $d_i \ne 3$ for $p_i=2$ and even $k \ge 4$ (i.e., we exclude the case $k=2$ from the consideration).
Using the cyclotomic polynomial we can define 
an element of order 5 in $GL(4,{\mathbb Z})$ 
$$
M_4=
\begin{pmatrix}
0 & 0 & 0 & -1 \\
1 & 0  &  0 & -1 \\
0 & 1 &   0 & -1\\
0& 0&  1 & -1
\end{pmatrix}
$$
and similarly to $M_4$  an element $M_6$ of order 7 in $GL(6,{\mathbb Z})$ . 
For any even $k\ge 4$, let $M\in GL(k,{\mathbb Z})$ be the direct sum of an appropriate number $s$ of $M_4$'s and maybe one $M_6$.
Let $\overline{\varphi}:{\mathbb Z}^k \to {\mathbb Z}^k$ be defined by $M$.  One can calculate
$$
\det(M_4-E)=5, \qquad \det(M_6-E)=7, \qquad \det(M-E)=5^s \mbox{ or } 5^s 7.
$$
Hence, by (\ref{eq:FHZk}),  $R(\overline{\varphi})=5^s \mbox{ or } 5^s 7<\infty$.
The length of any orbit is some divisor of $5$ or  $35$,  hence an \emph{odd} number. 

For $p$-power components ${\mathbb Z}_{p^i}$ with $p>2$, we define
$\varphi'$   by $a_0 \mapsto (p-1) a_0$.  Then, for an orbit $u, \overline{\varphi} u, \dots, \overline{\varphi}^\gamma u$, we need to verify (for finiteness of $R(\varphi')$) that $(p-1)^\gamma$ as a homomorphism    ${\mathbb Z}_{p^i} \to {\mathbb Z}_{p^i}$ has no non-trivial fixed elements, i.e. $ (p-1)^\gamma \not\equiv 1 \mod p$. This is fulfilled because, for an odd $\gamma$, $(p-1)^\gamma -1 \equiv -2  \not\equiv 0 \mod p$.

For $2$-power components ${\mathbb Z}_{2^i}\oplus {\mathbb Z}_{2^i}$, we define
$\varphi'$  by $a_0 \mapsto F_2 a_0$.  
For $2$-power components $({\mathbb Z}_{2^i})^5$, we define
$\varphi'$  by $a_0 \mapsto F_5 a_0$ .  Then, for any orbit of $\overline{\varphi}$ of length $\gamma$, we need to verify that $(F_5)^\gamma$ as a homomorphism   $({\mathbb Z}_{2^i})^5\to ({\mathbb Z}_{2^i})^5$ has no non-trivial fixed elements.
The end of the proof for Case 5) is completely analogous to Case 4). 
\end{proof}

\begin{proof}[Proof of Theorem \ref{teo:inverse}]
Denote by $A_p$ the $p$-subgroup of $G$.
Suppose that $R(\varphi)<\infty$ and arrive to a contradiction. For this purpose consider the characteristic subgroup $\Sigma' =\oplus A' \subset \Sigma$, where $A' \subset A$ is formed by elements of order $p^{i-1}$ from $A_p$ and by all elements of the other $A_q$, $q\ne p$.
Then $(A\wr {\mathbb Z}^k) /\Sigma' \cong B\wr {\mathbb Z}^k$, where $B=B'\oplus {\mathbb Z}_p$ and $B'=\oplus_j  ({\mathbb Z}_{p^j})^{d_j}$, $j\ge 2$, $d_j \ge 1$.
Conserve the notation $\varphi$, $\overline{\varphi}$ etc. for the induced automorphisms. 
By Lemma \ref{lem:extensions}, $R(\varphi_B)\le R(\varphi)<\infty$, where $\varphi_B:B\wr {\mathbb Z}^k \to 
B\wr {\mathbb Z}^k$  is the induced automorphism.
Consider the subgroup $D=B''\oplus {\mathbb Z}_p \subseteq B$ formed by all elements of order $p$ in $B$, where $B''$ is formed by all elements of order $p$ in $B'$. Then $D\subseteq B$ and $\Sigma_D=\oplus D \subseteq \Sigma'$ are characteristic subgroups 
and $D\wr {\mathbb Z}^k$ is a subgroup of $B\wr {\mathbb Z}^k$ (generally not normal). 
By (\ref{eq:restric_on_sub}) $D\wr {\mathbb Z}^k$ is $\varphi$-invariant and we 
wish to prove that $R(\varphi_D)<\infty$, where $\varphi_D: D\wr {\mathbb Z}^k \to D\wr {\mathbb Z}^k$ is the restriction of $\varphi_B$.

For this purpose consider $ \Sigma_{B/D}=\oplus B/D$ and automorphism $\varphi'_*: \Sigma_{B/D}\to \Sigma_{B/D}$ induced in an evident way.
We claim that $\varphi'_*$ has  $|C({\varphi'_*})| < \infty$. To prove this, consider  the following commutative diagram
\begin{equation}\label{eq:diagr_for_D_1}
\xymatrix{0 \ar[r]& \Sigma_D \ar[r] \ar[d]_{\varphi'_D}& \Sigma_{B} \rtimes {\mathbb Z}^k=B\wr {\mathbb Z}^k \ar[r] \ar[d]_{\varphi_B}  & \Sigma_{B/D} \rtimes {\mathbb Z}^k \ar[r] \ar[d]^{\varphi_*}& 0\\
0 \ar[r]& \Sigma_D \ar[r] & \Sigma_{B} \rtimes {\mathbb Z}^k=B \wr {\mathbb Z}^k \ar[r]  & \Sigma_{B/D} \rtimes {\mathbb Z}^k \ar[r]  & 0.}
\end{equation}
Then $\Sigma_{B/D} \rtimes {\mathbb Z}^k$ is a finitely generated residually finite group and $R(\varphi_*)<\infty$. 
Hence by Lemma \ref{lem:Jab_fin}, we have $|C({\varphi'_*})|< \infty$. 
Then by Lemma \ref{lem:extensions}, we obtain  $R(\varphi'_D)\le R(\varphi'_B) \cdot  |C({\varphi'_*})|< \infty$. 
The same is true for $\tau_g \circ \varphi$. Thus, by  Lemma \ref{lem:extensions}, 
since $C(\overline{\varphi})=\{0\}$,
from the commutative diagram
\begin{equation}\label{eq:diagr_for_D}
\xymatrix{0 \ar[r]& \Sigma_D \ar[r] \ar[d]_{\varphi'_D}& \Sigma_{D} \rtimes {\mathbb Z}^k=D\wr {\mathbb Z}^k \ar[r] \ar[d]_{\varphi_D}  &  {\mathbb Z}^k \ar[r] \ar[d]^{\overline{\varphi}}& 0\\
0 \ar[r]& \Sigma_D \ar[r] & \Sigma_{D} \rtimes {\mathbb Z}^k=D \wr {\mathbb Z}^k \ar[r]  &   {\mathbb Z}^k \ar[r]  & 0.}
\end{equation}
(and the analogous diagrams for $\tau_g \circ \varphi$) we obtain $R(\varphi_D)<\infty$ as desired.

Now observe that $B''$ is invariant in $D$ for automorphisms coming from $B$ (as restrictions), because $(b,0)$ is $p$-divisible in $B$ while $(b',k)$ is not $p$-divisible, where $b\in B''$, $b'\in B''$, $k \in {\mathbb Z}_p$, $k\ne 0$. The same is true for $\Sigma_{B''}=\oplus B''$. In particular, it is invariant for $\tau_g$, i.e., normal in $ \Sigma_{D} \rtimes {\mathbb Z}^k$.  Hence we can consider the diagram
\begin{equation}\label{eq:diagr_for_B''}
\xymatrix{0 \ar[r]& \Sigma_{B''} \ar[r] \ar[d]_{\varphi'_{B''}}& \Sigma_{D} \rtimes {\mathbb Z}^k=D \wr {\mathbb Z}^k \ar[r] \ar[d]_{\varphi_D}  & \Sigma_{D/B''} \rtimes {\mathbb Z}^k
= {\mathbb Z}_p\wr {\mathbb Z}^k \ar[r] \ar[d]^{\varphi_p}& 0\\
0 \ar[r]& \Sigma_{B''} \ar[r] & \Sigma_{D} \rtimes {\mathbb Z}^k=D \wr {\mathbb Z}^k \ar[r]  & \Sigma_{D/B''} \rtimes {\mathbb Z}^k= {\mathbb Z}_p\wr {\mathbb Z}^k  \ar[r]  & 0.}
\end{equation}
Then $R(\varphi_D)<\infty$ implies $R(\varphi_p)<\infty$ (Lemma \ref{lem:extensions}). But in the cases under consideration $R(\varphi_p)=\infty$ by 
\cite{TroLamp}. A contradiction.
\end{proof}

\begin{rmk}\label{rmk:gowon}\rm
Note that in the proof of Theorem 3.6 of \cite{gowon1} there is an inaccuracy in the argument after (3.3) because $(G\wr {\mathbb Z})/(L\wr {\mathbb Z})$ is not $G/L$
but $\oplus_i G/L$ which is not finite and even not finitely generated. The proof there can be corrected in a way as in our theorem above.
\end{rmk}

\end{document}